\newcommand{\dataversione}{\today}
\numberwithin{equation}{section}
\newtheoremstyle{mytheorem}
{}% measure of space to leave above the theorem. E.g.: 3pt
{}% measure of space to leave below the theorem. E.g.: 3pt
{\it}% name of font to use in the body of the theorem
{\parindent}% measure of space to indent
{\bf}% name of head font
{.}% punctuation between head and body
{ }% space after theorem head; " " = normal interword space
{\thmnumber{#2.~}\thmname{#1}\thmnote{~\rm#3}}% Manually specify head
\newtheoremstyle{myremark}
{}% measure of space to leave above the theorem. E.g.: 3pt
{}% measure of space to leave below the theorem. E.g.: 3pt
{\rm}% name of font to use in the body of the theorem
{\parindent}% measure of space to indent
{\bf}% name of head font
{.}% punctuation between head and body
{ }% space after theorem head; " " = normal interword space
{\thmnumber{#2.~}\thmname{#1}\thmnote{~\rm#3}}% Manually specify head
\newtheoremstyle{myparagraph}
{}% measure of space to leave above the theorem. E.g.: 3pt
{}% measure of space to leave below the theorem. E.g.: 3pt
{\rm}% name of font to use in the body of the theorem
{\parindent}% measure of space to indent
{\bf}% name of head font
{.}% punctuation between head and body
{ }% space after theorem head; " " = normal interword space
{\thmnumber{#2.~}\thmname{#1}\thmnote{#3}}% Manually specify head
\theoremstyle{mytheorem}
\newtheorem{theorem}[subsection]{Theorem}
\newtheorem{lemma}[subsection]{Lemma}
\newtheorem{proposition}[subsection]{Proposition}
\theoremstyle{myremark}
\theoremstyle{myparagraph}
\newtheorem*{parag*}{}
\def\@secnumfont{\sc}
\def\section{\@startsection{section}{1}%
\z@{1.5\linespacing\@plus .2\linespacing}{.7\linespacing}%
{\normalfont\sc\centering}}
\def\ps@headings{\ps@empty
 \def\@evenhead{%
  \setTrue{runhead}%
  \normalfont\footnotesize
  \rlap{\thepage}\hfil
  \def\thanks{\protect\thanks@warning}%
  \leftmark{}{}\hfil}%
 \def\@oddhead{%
  \setTrue{runhead}%
  \normalfont\footnotesize\hfil
  \def\thanks{\protect\thanks@warning}%
  \rightmark{}{}\hfil \llap{\thepage}}%
\let\@mkboth\markboth}
\renewenvironment{proof}[1][\proofname]{\par
  \pushQED{\qed}%
  \normalfont \topsep6\p@\@plus6\p@\relax
  \trivlist
  \itemindent\normalparindent
  \item[\hskip\labelsep
    \bfseries
    #1\@addpunct{.}]\ignorespaces
}{%
  \popQED\endtrivlist\@endpefalse
}
\providecommand{\proofname}{Proof}
\newcommand{\R}{\mathbb{R}}
\newcommand{\N}{\mathbb{N}}
\newcommand{\Leb}{\mathscr{L}}
\newcommand{\dV}{d_V\kern-1pt}
\newcommand{\trait}[3]{\vrule width #1ex height #2ex depth #3ex}
\newcommand{\trace}{\mathchoice%
  {\mathbin{\trait{.12}{1.2}{.03}\trait{.8}{0.09}{0.03}}}
  {\mathbin{\trait{.12}{1.2}{.03}\trait{.8}{0.09}{0.03}}}
  {\mathbin{\hskip.15ex\trait{.09}{.84}{0.02}\trait{.56}{.07}{.02}}\hskip.15ex}
  {\mathbin{\trait{.07}{.6}{.01}\trait{.4}{.06}{.01}}}}
\begin{document}

	% The following instructions defines the headings
	% for all pages except the first one
	% They can be safely removed
	%
\pagestyle{empty}
\pagestyle{myheadings}
\markboth%
{\underline{\centerline{\hfill\footnotesize%
\textsc{Andrea Marchese}%
\vphantom{,}\hfill}}}%
{\underline{\centerline{\hfill\footnotesize%
\textsc{Lusin theorems for Radon measures}%
\vphantom{,}\hfill}}}

	% We do not use the maketitle command, and
	% in the next lines we define the headline for the
	% first page, then title, authors' names, and so on...
	% acknowledgements are at the end of the introduction
	% affiliations are at the end of the paper
	%
\thispagestyle{empty}

~\vskip -1.1 cm

	% heading of first page
	%
{\footnotesize\noindent
[version:~\dataversione]%
\hfill
%to apper on\dots
%\hfill DOI~\href{http://dx.doi.org/***}{***} \par
}

\vspace{1.7 cm}

	% title
	%
{\large\bf\centering
Lusin type theorems for Radon measures\\
}

\vspace{.6 cm}

	% authors' names
	%
\centerline{\sc Andrea Marchese}

\vspace{.8 cm}

{\rightskip 1 cm
\leftskip 1 cm
\parindent 0 pt
\footnotesize

	% abstract, keywords and MSC numbers
	%
{\sc Abstract.}
We add to the literature the following observation. If $\mu$ is a singular measure on $\mathbb{R}^n$ which assigns measure zero to every porous set and $f:\mathbb{R}^n\rightarrow\mathbb{R}$ is a Lipschitz function which is non-differentiable $\mu$-a.e., then for every $C^1$ function $g:\mathbb{R}^n\rightarrow\mathbb{R}$ it holds $$\mu\{x\in\mathbb{R}^n: f(x)=g(x)\}=0.$$ In other words the Lusin type
approximation property of Lipschitz functions with $C^1$ functions does not hold with respect to a general Radon measure.

%\noindent Moreover we briefly discuss how the result contained in [\emph{Alberti, A Lusin type theorem for gradients, J. Funct. Anal., 100 (1991)}] could be extended and improved when the Lebesgue measure is replaced by an arbitrary Radon measure.

\par
\medskip\noindent
{\sc Keywords: }Lusin type approximation, Lipschitz function, Radon measure, Porous set.

\par
\medskip\noindent
{\sc MSC (2010):}
26A16, 41A30.
\par
}

%%%%%%%%%%%%%%%%%%%%%%%%%%%%%%%%%%%%%%%%%%%%%%%%%%%%%%%%%%%%%%%%%
%
%	INTRODUCTION
%
%%%%%%%%%%%%%%%%%%%%%%%%%%%%%%%%%%%%%%%%%%%%%%%%%%%%%%%%%%%%%%%%%

\section{Introduction}
\noindent It is well known (see Theorem 3.1.16 of \cite{Fe}) that given any Lipschitz function $f:\R^n\to\R$ and $\varepsilon>0$ there exists a $C^1$ function $g:\R^n\to\R$ such that
$$\Leb^n\{x\in\R^n:f(x)\neq g(x)\}<\varepsilon,$$
where $\Leb^n$ denotes the Lebesgue measure on $\R^n$. In this note we prove that in general it is not possible to replace the Lebesgue measure with any Radon measure $\mu$. Indeed the following theorem shows that such approximation is not available
whenever $\mu$ is a singular measure on $\R^n$ which assigns measure zero to every porous set (see \S\ref{s2} for the definition of porosity) and $f:\R^n\to\R$ is a Lipschitz function which is non-differentiable $\mu$-almost everywhere.\\
\begin{theorem}\label{main}
Let $\mu$ be a singular measure on $\R^n$ which assigns measure zero to every porous set. Let $f:\R^n\to\R$ be a Lipschitz function which is non-differentiable $\mu$-a.e. Then for every $C^1$ function $g$, it holds
$$\mu(\{x\in\R^n: f(x)=g(x)\})=0.$$
\end{theorem}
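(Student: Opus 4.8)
The plan is to set $E:=\{x\in\R^n: f(x)=g(x)\}$, which is closed (hence Borel) since $f-g$ is continuous, and to prove that $E$ splits into two $\mu$-negligible pieces: a porous set, and a set on which $f$ happens to be differentiable.

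The crux is the following pointwise dichotomy: \emph{if $x_0\in E$ and $E$ is not porous at $x_0$, then $f$ is differentiable at $x_0$ with $Df(x_0)=Dg(x_0)$}. To prove it, fix $x$ close to $x_0$ and let $z\in E$ realize $\dist(x,E)=|x-z|$; since $x_0\in E$ we have $|x-z|\le|x-x_0|$, and hence $|z-x_0|\le 2|x-x_0|$. Inserting $f(z)=g(z)$ and $f(x_0)=g(x_0)$, and combining the Lipschitz bound $|f(x)-f(z)|\le L\,\dist(x,E)$, the linear estimate $|Dg(x_0)(z-x)|\le|Dg(x_0)|\,\dist(x,E)$, and the $C^1$ Taylor remainder $g(z)-g(x_0)-Dg(x_0)(z-x_0)=o(|z-x_0|)=o(|x-x_0|)$, I obtain
$$|f(x)-f(x_0)-Dg(x_0)(x-x_0)|\le(L+|Dg(x_0)|)\,\dist(x,E)+o(|x-x_0|).$$
Now, $E$ failing to be porous at $x_0$ is precisely the infinitesimal condition $\dist(x,E)/|x-x_0|\to 0$ as $x\to x_0$; hence the right-hand side is $o(|x-x_0|)$ and differentiability follows.

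Passing to the contrapositive, let $N:=\{x\in E: f\text{ is not differentiable at }x\}$. By the dichotomy $E$ is porous at every point of $N$; and since $N\subseteq E$, any ball contained in some $B(x_0,r)$ and disjoint from $E$ is a fortiori disjoint from $N$, so $N$ is porous at each of its own points, i.e.\ $N$ is a porous set. The hypothesis on $\mu$ then gives $\mu(N)=0$. On the other hand $E\setminus N$ is contained in the set of points of differentiability of $f$, which is $\mu$-null because $f$ is non-differentiable $\mu$-a.e.; thus $\mu(E\setminus N)=0$. Adding the two estimates yields $\mu(E)=0$, as required.

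I expect the main obstacle to be the dichotomy itself—specifically, verifying the exact equivalence between ``$E$ is not porous at $x_0$'' and the condition $\dist(x,E)=o(|x-x_0|)$, and checking that the remainder terms are genuinely negligible relative to $|x-x_0|$. Once this is secured, the reduction to the porous set $N$ and the use of the two measure-theoretic hypotheses are routine; notably, the singularity of $\mu$ is not needed for this argument, since it is already forced by Rademacher's theorem together with the $\mu$-a.e.\ non-differentiability of $f$.
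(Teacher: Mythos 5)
Your proof is correct and takes essentially the same approach as the paper: the paper proves your key dichotomy in contrapositive form (at a point of the coincidence set $A$ where $h:=f-g$ fails to be differentiable with $Dh=0$, the bound $|h(y_k)|>C|y_k-x|$ together with the Lipschitz property of $h$ produces balls free of zeros of $h$, hence porosity of $A$ there), and then splits $A$ into a porous piece and a piece where $f$ is differentiable, exactly as you do. Your closing remark is also accurate: the paper's argument never uses the singularity of $\mu$ either, since it is already implied by Rademacher's theorem and the $\mu$-a.e.\ non-differentiability of $f$.
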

\noindent The validity of Lusin type approximation properties in metric measure spaces has recently attracted some attention. For example, in \cite{HM} and \cite{Sp} the validity of Lusin type theorems for horizontal curves in Carnot Groups is studied and \cite{Da}
extends a Lusin type theorem for gradients, originally established in \cite{Al}, to the framework of metric measure spaces with a differentiability structure. The forthcoming paper \cite{MS}, provides a deeper investigation on the latter problem in the special metric measure space given by the Euclidean space $\R^n$ endowed with an arbitrary Radon measure $\mu$, analyzing the possibility to prescribe not only the differential, but also some non-linear blowups at many points. The class of the ``admissible'' blowups is determined in terms of certain geometric properties of the measure $\mu$, namely in terms of the decomposability bundle of $\mu$, introduced in \cite{AM}. Finally, let us mention the paper \cite{Sh}, where a result in the spirit of \cite{Al} was proved for maps from an infinite dimensional locally convex space, endowed with a Gaussian measure, to its Cameron-Martin space.
%In particular, \cite{MS} shows that not only the result of \cite{Al} admits a natural extension to this setting, without any assumption on $\mu$, but in general the result might ``improve'' when $\mu$ is singular with respect to Lebesgue. By improving we mean that one can still find a Lipschitz function with prescribed
%differential at many points, but one can also prescribe a non-linear blowup. Note that the second option is prevented by the Rademacher Theorem in the classical setting (i.e. when $\mu$ is the Lebesgue measure). This fact shows, together with the present note, that the two classical Lusin type approximations considered above (i.e. the version for Lipschitz functions mentioned at the beginning of this introduction and the version for gradients proved in \cite{Al}) react somehow in opposite ways when one tries to replace the Lebesgue measure with any Radon measure: the former might fail, while the latter can only improve.\\

\noindent This paper is organized as follows. In \S \ref{s2} we recall two facts which are necessary to guarantee that the content of Theorem \ref{main} is non-empty: firstly the existence of a singular measure $\mu$ on $\R^n$ which assigns measure zero to every porous set (Proposition \ref{doublingmeasure}) and secondly the existence of a Lipschitz function $f:\R\to\R$ which is non-differentiable $\mu$-almost everywhere (Proposition \ref{nondiff}). These results are already present in the literature. Nevertheless, for the reader's convenience and in order to keep this note self contained, we present here slightly simplified versions of the original proofs. In \S \ref{s3} we prove Theorem \ref{main}. In \S\ref{s4} we briefly discuss the possibility to extend and improve the result contained in \cite{Al}. We observe that, in the one-dimensional case, the result is valid replacing the Lebesgue measure with any Radon measure and we show that, except for atomic measures, it is not possible to find Lipschitz functions with prescribed non-linear first order approximation at many points.

\subsection*{Acknowledgements}
The author is indebted to Bernd Kirchheim and David Preiss for valuable discussions.

\section{Notations and Prerequisites}\label{s2}
\noindent All the sets and functions considered in this note are tacitly assumed to be Borel measurable and measures are defined on the Borel $\sigma$-algebra. Moreover measures are positive, locally finite and \emph{inner regular} (i.e. the measure of a set can be approximated from within by compact subsets). As usual, we say that a measure $\mu$ is \emph{absolutely continuous} with respect to a measure $\nu$ (and we write $\mu\ll\nu$) if $\mu(E)=0$ for every Borel set $E$ such that $\nu(E)=0$. We say that $\mu$ is \emph{supported} on a set $E$ if $\mu(E^c)=0$ and we say that $\mu$ is \emph{singular} with respect to $\nu$ if there exists a Borel set $E$ such that $\nu(E)=0$ and $\mu$ is supported on $E$. When words like ``nullset'' and ``singular measure'' are used without further specification, they implicitly refer to the Lebesgue measure.\\
\noindent We say that a set $E\subset\R^n$ is \emph{porous at a point} $x$ if there exists constant $C(x)>0$, a positive sequence $r_k\rightarrow 0$ and a sequence of points $y_k\in B(x,r_k)$ such that
\begin{equation}\label{eqpor}
B(y_k,C(x)r_k)\subset B(x,r_k)\;\;\; {\rm{and}}\;\;\;E\cap B(y_k,C(x)r_k)=\emptyset,
\end{equation}
where we denoted by $B(y,r)$ the open ball centered at $y$ with radius $r$.
We say that $E$ is \emph{porous} if it is porous at every point $x\in E$. To guarantee that Theorem \ref{main} actually applies to a non-trivial class of pairs $(f,\mu)$ we need to prove first of all the existence of a singular measure on $\R^n$ which assigns measure zero to every porous set. Clearly it is sufficient to prove it for $n=1$. A proof of this fact can be found in \cite{Tk}. For the reader's convenience we present a slightly simpler, self-contained proof.

\begin{proposition}\label{doublingmeasure}
There exists a singular measure $\nu$ on the real line such that $\nu(P)=0$ whenever $P$ is porous.
\end{proposition}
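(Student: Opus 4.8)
The plan is to realize $\nu$ as a \emph{doubling} measure which is mutually singular with $\Leb^1$, and then to invoke the general principle that a doubling measure annihilates porous sets. It suffices to build $\nu$ on $[0,1]$, since any porous $P$ satisfies $\nu(P)=\nu(P\cap[0,1])$ and $P\cap[0,1]$ is again porous (a subset of a set porous at each of its points is porous at each of its own points). Concretely, I would take the Bernoulli (binomial) measure attached to a fixed parameter $p\in(0,\tfrac12)$: give $[0,1]$ mass $1$ and, recursively, split the mass of every dyadic interval between its left and right halves in the ratio $p:(1-p)$. Equivalently, $\nu$ is the law of $\sum_{i\ge1}X_i2^{-i}$ with the $X_i$ independent and $\nu(X_i=1)=1-p$; a dyadic interval of generation $n$ carrying $j$ digits equal to $1$ then has mass $p^{n-j}(1-p)^{j}$.

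Singularity is the easy part. Under $\nu$ the binary digits are i.i.d.\ with $\nu(X_i=1)=1-p$, so by the strong law of large numbers the frequency of digits equal to $1$ converges to $1-p$ for $\nu$-almost every point, whereas it converges to $\tfrac12$ for $\Leb^1$-almost every point. Since $p\neq\tfrac12$, the set on which this frequency equals $1-p$ carries full $\nu$-measure and zero Lebesgue measure, hence $\nu$ is singular.

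Next I would verify that $\nu$ is doubling, i.e.\ $\nu(B(x,2r))\le D\,\nu(B(x,r))$ for some $D=D(p)$ and all $x\in[0,1]$, $r>0$. The mass of any dyadic interval is between $\min(p,1-p)$ and $\max(p,1-p)$ times that of its parent, so dyadic intervals of comparable length and bounded separation have comparable mass; passing from an arbitrary ball $B(x,r)$ to the finitely many dyadic intervals of generation $\approx\log_2(1/r)$ that cover it and are covered by it then yields the inequality. The delicate point is the behaviour near deep dyadic boundaries (for instance around $x=\tfrac12$), where two adjacent intervals can have wildly different masses; there one checks that $\nu(B(x,2r))$ is dominated by the heavier side and stays comparable to $\nu(B(x,r))$, so doubling survives. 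I expect this scale-by-scale bookkeeping to be the main technical obstacle.

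Finally, granting doubling, I would deduce $\nu(P)=0$ for every porous $P$. Decompose $P=\bigcup_{m}P_m$ with $P_m=\{x\in P:C(x)\ge 1/m\}$ (these can be taken Borel) and fix $m$. For $x\in P_m$, along the defining sequence $r_k\to0$ there is, by \eqref{eqpor}, a ball $B(y_k,r_k/m)\subset B(x,r_k)$ disjoint from $P$, while $y_k\in B(x,r_k)$ forces $B(x,r_k)\subset B(y_k,2r_k)$. Iterating doubling $\ell=\lceil\log_2(2m)\rceil$ times gives $\nu(B(y_k,r_k/m))\ge D^{-\ell}\nu(B(x,r_k))$, whence
$$\frac{\nu(P_m\cap B(x,r_k))}{\nu(B(x,r_k))}\le 1-D^{-\ell}<1$$
along $r_k\to0$, for \emph{every} $x\in P_m$. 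This contradicts the Lebesgue--Besicovitch differentiation theorem for doubling measures, which forces this density to tend to $1$ at $\nu$-almost every point of $P_m$, unless $\nu(P_m)=0$. Summing over $m$ yields $\nu(P)=0$.
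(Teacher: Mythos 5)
There is a genuine gap, and it sits exactly at the point you flag as ``the main technical obstacle'': the Bernoulli measure with a \emph{fixed} parameter $p\in(0,\tfrac12)$ is not doubling, and no scale-by-scale bookkeeping can rescue it. Concretely, let $I'_n=[\tfrac12,\tfrac12+2^{-n}]$ (digits $1,0,\dots,0$, mass $(1-p)p^{n-1}$), let $x_n$ be its midpoint and $r_n=2^{-n-1}$. Then $B(x_n,r_n)$ is the interior of $I'_n$, so $\nu(B(x_n,r_n))=(1-p)p^{n-1}$ ($\nu$ has no atoms), while $B(x_n,2r_n)$ contains $(\tfrac12-2^{-n-1},\tfrac12]$, a generation-$(n+1)$ interval with digits $0,1,\dots,1$ and mass $p(1-p)^{n}$. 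Hence
$$\frac{\nu(B(x_n,2r_n))}{\nu(B(x_n,r_n))}\;\geq\;\frac{p(1-p)^{n}}{(1-p)p^{n-1}}\;=\;p\left(\frac{1-p}{p}\right)^{n-1}\longrightarrow\infty,$$
so no doubling constant $D(p)$ exists; equivalently, a doubling measure on the line must give comparable mass to adjacent intervals of equal length, whereas your two generation-$n$ intervals meeting at $\tfrac12$ have mass ratio $((1-p)/p)^{n-2}\to\infty$. Your two supporting implications are individually correct --- singularity via the strong law is fine, and ``doubling $\Rightarrow$ porous sets are null'' via the density theorem is a correct and clean argument --- but they are never simultaneously instantiated by your measure, so the proof collapses.

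The failure is structural, not cosmetic: constant bias per generation is precisely what must be avoided, and the repair is what the paper does. It uses a Riesz product, i.e.\ a generation-dependent splitting with weights $\frac{1\pm a_i}{2}$, where $a_i\searrow 0$ kills the distortion between adjacent deep dyadic intervals (this is the content of the claim \eqref{eclaim}, an almost-everywhere \emph{asymptotic} substitute for doubling, exploited through tangent measures in Lemma \ref{lemmaporous}), while $\sum_i a_i^2=\infty$ is the Kakutani-type condition keeping the limit singular (proved via Chebyshev instead of the strong law). Your choice corresponds to $a_i\equiv 1-2p$: it satisfies the singularity condition but irreparably violates the decay. Note also that even with $a_i=i^{-1/2}$ the paper's measure is \emph{not} globally doubling (adjacent-interval ratios along long digit runs are still unbounded), which is why the paper works with blowups and an exceptional set $E^0_j\cup E^1_j$ rather than with a global doubling constant; genuinely doubling singular measures do exist (they are tied to singular quasisymmetric maps, cf.\ the route of \cite{Tk}), but constructing one is substantially more delicate than your sketch, and the weaker a.e.\ tangent-measure condition suffices for the proposition.
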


\noindent The construction uses the idea of Riesz product measures. On $[0,1]$ we call $i$-th generation of dyadic intervals, $i=0,1,2,\ldots$, all the intervals of the form
$$I=[a2^{-i},(a+1)2^{-i}],\; {\rm{for}}\; a=0,\ldots,2^{i}-1.$$
Given a measure $\mu$ and a measurable function $f$ we denote by $f\mu$ the measure satisfying
$$f\mu(A):=\int_Af\;d\mu,$$
for every Borel set $A$. In particular, we write $\mu\trace A$ to indicate the measure $f\mu$, where $f=1_A$ is the characteristic function of a Borel set $A$ assuming values 0 and 1.
Finally we say that $x\in\R$ is a \emph{Lebesgue continuity point} for the function $f$ with respect to the measure $\mu$ if we have
$$\frac{1}{\mu(B(x,r))}\int_{B(x,r)}|f(y)-f(x)|\;{\rm{d}}\mu\rightarrow 0, \;\;\; {\rm{as}}\;{r\rightarrow 0}.$$

\noindent We will make use of the following lemma, which is a well-known fact. Being unable to find a precise reference, we prefer to include its proof.
\begin{lemma}[(Martingale Theorem)]\label{martin}
Let $(\mu_i)_{i\in\N}$ be a sequence of probability measures on $[0,1]$. Assume that $\mu_i=f_n\Leb_1$, where $f_i$ is constant on the dyadic intervals of the $i$-th generation. Assume moreover that $\mu_j(I)=\mu_i(I)$ for every
dyadic interval $I$ of the $i$-th generation, for every $j>i$. Then $\mu_i$ weakly converges to a probability measure $\nu$. Moreover the Radon-Nikodym derivative $f$ of the absolutely continuous part of $\nu$ satisfies
$$f=\lim_{i\rightarrow\infty}f_i,\;\;\;\Leb_1-{\rm{a.e.}}$$
\end{lemma}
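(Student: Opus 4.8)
The plan is to construct the limit measure $\nu$ directly on the dyadic algebra, deduce weak convergence by a step-function approximation, and then read the density statement off as a dyadic differentiation (equivalently, martingale convergence) result.

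First I would define $\nu$ on dyadic intervals. If $I$ has generation $i$, the hypothesis gives $\mu_j(I)=\mu_i(I)$ for all $j\ge i$, so the values $\mu_j(I)$ stabilize and I may set $\nu(I):=\mu_i(I)$. These numbers are nonnegative, add up correctly when a generation-$i$ interval is subdivided into the two generation-$(i+1)$ intervals it contains (because each $\mu_{i+1}$ is a measure), and satisfy $\nu([0,1])=1$. They therefore define a premeasure on the algebra generated by the dyadic intervals, which extends by Carath\'eodory's theorem to a Borel probability measure $\nu$ on $[0,1]$.

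Next I would prove that $\mu_i\rightharpoonup\nu$. Fix $\varphi\in C([0,1])$ and $\eps>0$. By uniform continuity there is a generation $i_0$ such that $\varphi$ oscillates by at most $\eps$ on every generation-$i_0$ interval; let $\varphi_0$ be the step function equal to the value of $\varphi$ at the left endpoint of each such interval, so that $\|\varphi-\varphi_0\|_\infty\le\eps$. Since $\varphi_0$ is constant on generation-$i_0$ intervals and $\mu_j(I)=\nu(I)$ there for $j\ge i_0$, one has $\int\varphi_0\,d\mu_j=\int\varphi_0\,d\nu$. As $\mu_j$ and $\nu$ are probability measures, the triangle inequality then yields $|\int\varphi\,d\mu_j-\int\varphi\,d\nu|\le 2\eps$ for all $j\ge i_0$, which is the desired weak convergence.

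It remains to identify the a.e.\ limit of $f_i$. The key observation is the identity
$$f_i(x)=\frac{\mu_i(I_i(x))}{\Leb_1(I_i(x))}=\frac{\nu(I_i(x))}{\Leb_1(I_i(x))},$$
where $I_i(x)$ denotes the generation-$i$ dyadic interval containing $x$; thus $f_i(x)$ is exactly the dyadic difference quotient of $\nu$ at $x$, and $(f_i)$ is a nonnegative martingale for the dyadic filtration. Writing the Lebesgue decomposition $\nu=f_\nu\Leb_1+\nu_s$, the conclusion to be proved is that these difference quotients converge $\Leb_1$-a.e.\ to $f_\nu$. One bound comes for free: Fatou applied on each dyadic interval gives $(\liminf_i f_i)\Leb_1\le\nu$ on the dyadic algebra, hence $\liminf_i f_i\le f_\nu$ a.e. The main obstacle is the matching lower bound, i.e.\ showing that the quotients genuinely converge and recover the full absolutely continuous density while the singular part disappears in the limit. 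I would settle this with the dyadic maximal inequality $\Leb_1\{x:\sup_i\rho(I_i(x))/\Leb_1(I_i(x))>\lambda\}\le\rho([0,1])/\lambda$, valid for every finite $\rho\ge0$ because the maximal dyadic intervals on which the quotient exceeds $\lambda$ are pairwise disjoint. For the singular part, inner regularity lets me split $\nu_s=\rho_1+\rho_2$ with $\rho_1$ supported on a compact $\Leb_1$-null set and $\rho_2$ of arbitrarily small total mass; the quotients of $\rho_1$ vanish eventually at every point off its support (the shrinking dyadic intervals avoid a compact set), while the maximal inequality controls $\rho_2$, so $\nu_s(I_i(x))/\Leb_1(I_i(x))\to0$ a.e. For the absolutely continuous part, the same maximal bound together with the density of $C([0,1])$ in $L^1$ yields convergence of the averages of $f_\nu$ to $f_\nu$ a.e. Combining the two directions gives $f_i\to f_\nu$ a.e., which completes the proof.
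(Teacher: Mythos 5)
There is a genuine gap, and it sits at the very first step: the stabilized interval function $I\mapsto\lim_j\mu_j(I)$ is finitely additive on the dyadic algebra, but it need \emph{not} be $\sigma$-additive, so it is not a premeasure and Carath\'eodory's theorem does not apply. The obstruction is mass drifting onto a dyadic grid point, which the hypotheses allow: take $f_i=2^i\chi_{[1/2-2^{-i},\,1/2]}$. Each $f_i$ is constant on the dyadic intervals of the $i$-th generation, and the consistency hypothesis $\mu_j(I)=\mu_i(I)$ holds for all $j>i$ (the overlaps that could spoil it are single points, which are $\Leb_1$-null), yet $\mu_i\rightharpoonup\delta_{1/2}$. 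For this sequence the stabilized value of the half-open interval $[1/4,1/2)$ is $1$, while its partition into the dyadic intervals $[1/2-2^{-k},\,1/2-2^{-k-1})$, $k\geq 2$, consists of sets of stabilized value $0$; so no Borel measure agrees with all the stabilized values, and the outer measure your construction generates is in fact the zero measure, not a probability. This failure propagates: the identity $\mu_j(I)=\nu(I)$ that you use both in the weak-convergence estimate and in the key martingale identity $f_i(x)=\nu(I_i(x))/\Leb_1(I_i(x))$ is false for the true limit in general; weak convergence only gives the sandwich $\nu(\Int I)\leq\lim_j\mu_j(I)\leq\nu(\overline{I})$.

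The good news is that both halves of your plan are repairable, and the second half is a genuinely different (and more self-contained) route than the paper's. For existence of the limit, note that your own step-function computation shows $\int\varphi_0\,d\mu_j$ is \emph{constant} for $j\geq i_0$, hence $\bigl(\int\varphi\,d\mu_j\bigr)_j$ is Cauchy for every $\varphi\in C([0,1])$; the limit functional is positive with total mass $1$, and the Riesz representation theorem produces $\nu$ with no Carath\'eodory extension needed (the paper instead invokes the compactness theorem for measures and upgrades subsequential to full convergence). For the density, your dyadic maximal-inequality argument survives if you replace the exact identity by $\nu(\Int I_i(x))\leq f_i(x)\Leb_1(I_i(x))\leq\nu(\overline{I_i(x)})$, valid since $\mu_j(\bd I_i(x))=0$: the lower bound already dominates the averages of the absolutely continuous density, while the discrepancy in the upper bound is controlled by your singular-part argument (compact null support plus the maximal inequality, which still holds for closed dyadic intervals since their overlaps are $\Leb_1$-null). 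With that fix your proof would be a from-scratch alternative to the paper's, which instead cites the Besicovitch differentiation theorem and the Lebesgue differentiation theorem for families of bounded eccentricity at a typical non-dyadic point. As written, however, the construction of $\nu$ and the identities drawn from it do not hold.
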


\begin{proof}
By the compactness theorem for measures (see Proposition 2.5 of \cite{DL}), there is a subsequence $\mu_{i_h}$ weakly converging to a measure $\nu$. Since the dyadic intervals generate the Borel $\sigma$-algebra, the hypotheses of the theorem guarantee that actually
the whole sequence $\mu_i$ converges to $\nu$. To prove the second part of the theorem, denote $\nu_s$ the singular part of $\nu$ and let $S\subset[0,1]$ be a nullset such that $\nu_s([0,1]\setminus S)=0$. Fix a point $x\in[0,1]\setminus S$ with the following properties:
\begin{itemize}
  \item $x$ is a point of Lebesgue continuity for $f$ with respect to $\Leb_1$;
  \item $x$ is a continuity point for every $f_n$;
  \item $2^i\nu_s(I_i)\to 0$ as $i\to\infty$,
\end{itemize}
where we denoted by $I_i$ the dyadic
interval of the $i$-th generation containing $x$ (the second property guarantees that such interval is unique). Notice that these three properties are satisfied at $\Leb_1$-almost every point in $[0,1]$ and in particular the third property follows from the Besicovitch Differentiation Theorem (see Theorem 2.10 of \cite{DL}). Observe that $\{I_i\}_{i\in\N}$ is a family of sets of \emph{bounded eccentricity}, i.e. there exists $C>0$ such that each $I_i$ is contained in a ball $B$, centered at $x$, with $\Leb_1(I_i) \geq C\Leb_1(B)$. Therefore the Lebesgue Theorem (see Theorem 7.10 in \cite{Ru1}) yields:
$$f_i(x)=\frac{\mu_i(I_i)}{\Leb_1(I_i)}=\frac{\mu(I_i)}{\Leb_1(I_i)}=\frac{\int_{I_i}f\;{\rm{d}}\Leb_1}{\Leb_1(I_i)}+\frac{\mu_s(I_i)}{\Leb_1(I_i)}\rightarrow f(x), \;\;\; {\rm{as}}\;{i\rightarrow\infty}.$$
\end{proof}

\noindent The proof of Proposition \ref{doublingmeasure} uses a blowup argument. Given a Radon measure $\nu$ on $\R$ and a point $x$ we define the measure $\nu_{x,r}$ by
$$\nu_{x,r}(A):=\nu(x+rA), \;{\rm{for\,every\;Borel\;set\;}}A.$$ 
We denote by ${\rm{Tan}}(\nu,x)$ the set of the \emph{blowups} of $\nu$ at $x$, i.e. all the possible limits of the form
$$\lim_{r_i\searrow 0}\kappa_i$$
where
\begin{equation}\label{eqbu}
\kappa_i:=\frac{\nu_{x,r_i}\trace B(0,1)}{\nu(B(x,r_i))}.
\end{equation}

\noindent The following lemma gives a sufficient condition for a measure to assign measure zero to every porous set.
%\noindent The following lemma shows that, for every measure $\mu$ on the line, if there exists a porous set $P$ with $\mu(P)>0$, then for every $x\in P$ ${\rm{Tan}}(\mu,x)$ contains a measure $\nu$ satisfying $\Leb_1\not\ll\nu$ (i.e. the Lebesgue measure is not absolutely continuous with respect to $\nu$). 

\begin{lemma}\label{lemmaporous}
Let $\nu$ be a locally finite measure on the line, such that for $\nu$-a.e. $x$ and for every $\eta\in {\rm{Tan}}(\nu,x)$, it holds $\Leb_1\ll\eta$. Then $\nu(P)=0$ for every porous set $P\subset \R$.
\end{lemma}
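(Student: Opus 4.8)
The plan is to argue by contradiction, assuming $\nu(P)>0$, and to produce at a suitably chosen point $x\in P$ a tangent measure $\eta\in\Tan(\nu,x)$ which vanishes on a nonempty open interval, in violation of $\Leb_1\ll\eta$. The first step is the selection of a good point. Since $\nu\trace P\ll\nu$, the Besicovitch Differentiation Theorem yields, for $\nu$-a.e. $x\in P$,
$$\lim_{r\to 0}\frac{\nu(B(x,r)\cap P)}{\nu(B(x,r))}=1,\qquad\text{equivalently}\qquad\lim_{r\to 0}\frac{\nu(B(x,r)\setminus P)}{\nu(B(x,r))}=0.$$
Combining this with the hypothesis, which holds at $\nu$-a.e. point and hence at $\nu$-a.e. point of $P$, and using $\nu(P)>0$, I can fix a single point $x\in P$ that is simultaneously a $\nu$-density point of $P$ and satisfies $\Leb_1\ll\eta$ for every $\eta\in\Tan(\nu,x)$.

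Next I would exploit porosity at $x$. Let $C=C(x)>0$, $r_k\searrow 0$ and $y_k\in B(x,r_k)$ be as in \eqref{eqpor}, so that $B(y_k,Cr_k)\subset B(x,r_k)$ and $P\cap B(y_k,Cr_k)=\emptyset$. The latter forces $B(y_k,Cr_k)\subset B(x,r_k)\setminus P$, so the density property gives
$$\frac{\nu(B(y_k,Cr_k))}{\nu(B(x,r_k))}\to 0.$$
Rescaling by $z\mapsto(z-x)/r_k$, set $\tilde y_k:=(y_k-x)/r_k\in B(0,1)$; then the normalized blowups $\kappa_k$ of \eqref{eqbu} satisfy $B(\tilde y_k,C)\subset B(0,1)$ and
$$\kappa_k(B(\tilde y_k,C))=\frac{\nu(B(y_k,Cr_k))}{\nu(B(x,r_k))}\to 0.$$

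Finally I would pass to the limit. Since the $\kappa_k$ are probability measures on the compact ball $\ove{B(0,1)}$ and the $\tilde y_k$ lie in $\ove{B(0,1)}$, after extracting a subsequence I may assume $\kappa_k\rightharpoonup\eta\in\Tan(\nu,x)$ weakly and $\tilde y_k\to\tilde y$. For any fixed $C'<C$, as soon as $|\tilde y_k-\tilde y|<C-C'$ one has $B(\tilde y,C')\subset B(\tilde y_k,C)$, hence $\kappa_k(B(\tilde y,C'))\to 0$; by lower semicontinuity of the mass of open sets under weak convergence, $\eta(B(\tilde y,C'))\le\liminf_k\kappa_k(B(\tilde y,C'))=0$. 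Since $B(\tilde y,C')$ is a nonempty open interval we have $\Leb_1(B(\tilde y,C'))>0$, contradicting $\Leb_1\ll\eta$, and therefore $\nu(P)=0$.

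The main obstacle is precisely this passage to the limit: the holes $B(\tilde y_k,C)$ have moving centers, so one cannot read off a zero-measure ball for $\eta$ directly. The device of slightly shrinking the radius to $C'<C$ (so that a single ball $B(\tilde y,C')$ eventually sits inside every moving hole) combined with using the correct direction of the portmanteau inequality (lower semicontinuity on open sets) is what makes the argument close.
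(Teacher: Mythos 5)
Your proof is correct, and it takes a genuinely different route through the one step where the paper and you diverge. The paper blows up the \emph{restricted} measure $\nu\trace P$, which by construction gives mass exactly zero to the rescaled holes $B\bigl((y_k-x)/r_k,\,C\bigr)$, and then invokes the locality of tangent measures (Remark 3.13 of \cite{DL}: $\Tan(\nu\trace E,x)=\Tan(\nu,x)$ for $\nu$-a.e.\ $x\in E$) to transfer the resulting degenerate blowup back to $\Tan(\nu,x)$, where the hypothesis $\Leb_1\ll\eta$ applies. You instead blow up $\nu$ itself and replace the locality lemma with the Besicovitch Differentiation Theorem: at a $\nu$-density point of $P$ one has $\nu(B(x,r)\setminus P)/\nu(B(x,r))\to 0$, so the hole-mass of your blowups $\kappa_k$ is only \emph{asymptotically} zero rather than identically zero, which still suffices via the portmanteau inequality on open sets. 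The trade-off is mild: the paper's argument is shorter modulo the cited locality fact (whose proof is itself essentially the density argument you run by hand), while yours is more self-contained, stays inside $\Tan(\nu,x)$ throughout so no transfer between tangent cones is needed, and crucially makes explicit the step the paper leaves implicit --- handling the moving centers by extracting $\tilde y_k\to\tilde y$ and shrinking the radius to $C'<C$ so that a \emph{fixed} open ball $B(\tilde y,C')$ eventually sits inside every hole, whence $\eta(B(\tilde y,C'))\le\liminf_k\kappa_k(B(\tilde y,C'))=0$ by lower semicontinuity; this is exactly the paper's $B(y,C/2)$ device, spelled out. (Two small points you use implicitly and could state: $|\,(y_k-x)/r_k\,|\le 1-C$ forces $\tilde y\in\overline{B(0,1-C)}$, so $B(\tilde y,C')\subset B(0,1)$, which matters since $\eta$ is supported in $\overline{B(0,1)}$ and the hypothesis $\Leb_1\ll\eta$ is only meaningful for balls inside $B(0,1)$; and $\nu(B(x,r_k))>0$ holds because $\nu$-a.e.\ $x$ lies in the support of $\nu$, so your $\kappa_k$ are indeed well-defined probability measures.)
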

\begin{proof}
We assume by contradiction that $\nu$ satisfies the hypotheses of the lemma but there exists a porous set $P$ with $\nu(P)>0$. It is a general fact about tangent measures (see Remark 3.13 of \cite{DL}) that if $E$ is a Borel set, then ${\rm{Tan}}(\nu\trace E,x)={\rm{Tan}}(\nu,x)$ for $\nu$-a.e $x\in E$. Then for $\nu$-a.e. $x\in P$, every blowup $\eta$ of $\nu\trace P$ at $x$ is an element of ${\rm{Tan}}(\nu,x)$. In particular, by hypothesis, $\eta(B)>0$ for every open ball $B\subset B(0,1)$. Instead we show that, for every $x\in P$, it is possible to find a blowup $\eta$ of $\nu\trace P$ at the point $x$ and a non-trivial ball $B\subset B(0,1)$ such that $\eta(B)=0$. Fix $x\in P$ and let $C:=C(x)$, $(r_k)_{k\in\N}$ and $(y_k)_{k\in\N}$ as in \eqref{eqpor}. 
Possibly passing to a subsequence, we may assume that $(y_k-x)/r_k$ converges to a point $y\in \overline{B(0,1-C)}$. This implies that, for every subsequence of $(r_k)_{k\in\N}$, such that the corresponding rescaled measures (defined in \eqref{eqbu}) converge weakly to a measure $\eta\in {\rm{Tan}}(\nu\trace P,x)$, it holds $\eta(B(y,C/2))=0$.
\end{proof}

\begin{proof}[Proof of Proposition \ref{doublingmeasure}]
Consider the 1-periodic function $\varphi:\R\rightarrow\R$ which agrees with $2\chi_{[0,1/2]}-1$ on $[0,1]$ and consider a non-increasing sequence of positive numbers $(a_i)$, $i=0,1,2,\ldots,$ such that $a_0<1, a_i\searrow 0$ and $\sum_{i}a_i^2=+\infty$. Further hypotheses on $(a_i)$ will be specified later. 
Define on $[0,1]$ the functions
$$\varphi_i(x)=a_i\varphi(2^ix),\;\;\Phi_N=\sum_{i=0}^N\varphi_i,\;\;\psi_i=1+\varphi_i,\;\;\Psi_N=\prod_{i=0}^N\psi_i.$$
Consider now the measures $\mu_N=\Psi_N \Leb_1$. By the Martingale Theorem there exists a measure $\nu$ such that $\mu_N\stackrel{\ast}{\rightharpoonup}\nu$ as $N\to\infty$ and moreover $\Psi_N\rightarrow\frac{d\nu_{abs}}{dx}$ (the Radon-Nikodym
derivative of the absolutely continuous part of $\nu$). We will prove that $\nu$ is a singular measure and, for a suitable choice of $(a_n)$ it satisfies $\nu(P)=0$, for every porous set $P$.\\

\noindent {\bf{$\nu$ is singular.}} To prove that $\nu$ is singular, according to Lemma \ref{martin} it is sufficient to prove that $\liminf_N \Psi_N=0, \Leb_1$-a.e. Notice now that for $|t|<1$ there holds
$$\log(1+t)\leq t-\frac{t^2}{8},$$
hence we have
$$\log(\Psi_N)=\sum_{i=1}^N\log(1+\varphi_i)\leq\sum_{i=1}^N\left(\varphi_i-\frac{\varphi_i^2}{8}\right)=\Phi_N-\sum_{i=1}^N\frac{a_i^2}{8}.$$
Since the random variable $\Phi_N$ has expected value $E(\Phi_N)=0$ and variance $\sigma^2(\Phi_N)=\sum_{i=1}^N a_i^2$, then Chebyshev inequality (see 5.10.7 of \cite{As}) gives
$$
\Leb_1\left(\left\{x\in[0,1]:\Phi_N(x)>\sum_{i=1}^N\frac{a_i^2}{16}\right\}\right)\leq\frac{16^2}{\sum_{i=1}^Na_i^2},
$$
and the right-hand side tends to zero as $N\rightarrow\infty$ because $\sum a_i^2=+\infty$. Therefore we have
$$\liminf_N\Psi_N=\exp\left(\liminf_N\left(\Phi_N-\frac{\sum_{i=1}^Na_i^2}{8}\right)\right)=0,\;\;\; \Leb_1-{\rm{a.e}}.$$

\noindent {\bf{$\nu(P)=0$ whenever $P$ is a porous set.}}
Now we make the choice $a_0=a_1=1/\sqrt{2}$ and for $i>1$ $a_i:=i^{-1/2}$; we want to show that for $\nu$-a.e. point $x\in(0,1)$, every blowup of $\nu$ at $x$ gives positive measure to every non-trivial interval $J\subset(-1,1)$. By Lemma \ref{lemmaporous}, this guarantees that every porous set is $\nu$-null.

\noindent Consider a point $x\in(0,1)$, a measure $\eta\in {\rm{Tan}}(\nu,x)$ and a sequence $r_j\searrow 0$ such that $\eta=\lim_j \kappa_j$, where $\kappa_j$ is defined according to \eqref{eqbu}. We may further assume 
$$r_j\leq {\rm{dist}}(x,B(0,1)^c).$$ For every $j\in\N$, there exist $i\in\N$, and a dyadic interval $I_i(x)$, of the $i$-th generation, containing $x$, such that it also contains $x+r_j$ or $x-r_j$, but no interval in the next generation has the same property. Note that $I_i(x)$ cannot contain both $x+r_j$ and $x-r_j$. In particular we have 
$$r_j\leq|I_i(x)|\leq 2r_j.$$ 
Denote by $I'_i(x)$ the adjacent dyadic interval of the same generation as $I_i(x)$, that together with $I_i(x)$ covers $(x-r_j,x+r_j)$. We claim that, eventually in $j$ (remember that the parameter $i$ depends on $j$ in a monotone way), the ratio $$c_j(x)=\frac{\mu_i(I_i(x))}{\mu_i(I'_i(x))}$$ satisfies
\begin{equation}\label{eclaim}
e^{-4}\leq c_j(x)\leq e^4,\;{\rm{for}}\;\nu {\rm{-a.e.}}\;x\in (0,1).
\end{equation}
This would be sufficient to prove that $\eta(J)>0$ for every non-trivial closed interval $J\subset(-1,1)$. Indeed we have
\begin{equation}\label{ebuz}
\nu(J)\geq\limsup_j\kappa_j(J)\geq\limsup_j\frac{\nu(I)}{\nu(I_i(x)\cup I'_i(x))},
\end{equation}
where $I$ is the largest dyadic interval contained in $x+r_jJ$. The fact that $\nu(I)=\mu_m(I)$ for every $m$ sufficiently large and \eqref{eclaim} imply that the ratio in \eqref{ebuz} is bounded from below by a positive constant.\\

\noindent To prove the claim \eqref{eclaim}, fix $x\in(0,1)$ and let $(\sigma_k(x))_{k\in\N}$ be the unique sequence made of 0's and 1's such that
\begin{equation}\label{defsigma}
\min\{I_i(x)\}=\sum_{k=0}^i 2^{-k}\sigma_k(x)
\end{equation}
(see Figure \ref{segmenti}), and analogously define $(\sigma'_k(x))_{k=1}^i$ replacing $I_i$ with $I'_i$ in \eqref{defsigma}.\\
\begin{figure}[htbp]
\begin{center}
\scalebox{1}{
\input{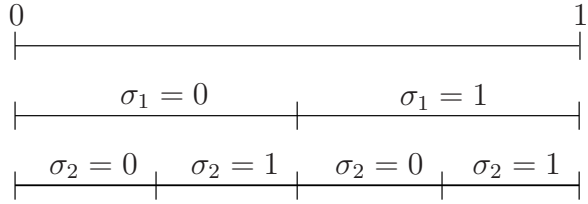}
}
\end{center}
\caption{Values of the function $\sigma_k(x)$, for $k=1,2$.}
\label{segmenti}
\end{figure}

\noindent Obviously we have
$$\max\{c_j(x),c_j(x)^{-1}\}\leq\prod_{k=k_0+1}^i\frac{1+a_k}{1-a_k},$$
where $k_0$ is the last index smaller than $i=i(j)$ such that $\sigma_{k_0}(x)=\sigma'_{k_0}(x)$. Notice that if $k_0<i-1$ and $I_i'(x)$ is the left neighborhood of $I_i(x)$, we have $\sigma_{k_0+1}(x)=1$ and $\sigma_k(x)=0$ for every $k=k_0+2,\ldots,i$; vice-versa if $I_i'(x)$ is the right neighborhood of $I_i(x)$, we have $\sigma_{k_0+1}(x)=0$ and $\sigma_k(x)=1$ for every $k=k_0+2,\ldots,i$.

\noindent For $\ell=0,1$, and for $n\geq 2$ denote
$$E^\ell_j=\{x\in(0,1):\sigma_k(x)=\ell,\;\;\; {\rm{for\;every\;}} k\in[i-i^{1/2}+2,i]\}.$$
Observe that, for $j$ sufficiently large, the set of points $x$ such that $c_j(x)\not\in[e^{-4},e^4]$ is contained in $E^0_j\cup E^1_j$. Indeed assume by contradiction that $x\not\in E^0_j\cup E^1_j$, but either $c_j(x)>e^4$ or $c_j(x)<e^{-4}$. In the first case we have
\begin{equation}\label{equazz}
\prod_{k=k_0+1}^i\frac{1+a_k}{1-a_k}>e^4.
\end{equation} 
Since $\log(1+t)<t$, we have
$$\prod_{k=k_0+1}^i\frac{1+k^{-1/2}}{1-k^{-1/2}}=\exp\left(\log\left(\prod_{k=k_0+1}^i\frac{1+k^{-1/2}}{1-k^{-1/2}}\right)\right)<
\exp\left(2\sum_{k=k_0+1}^ik^{-1/2}\right)$$
But it is easy to see that.
$$2\sum_{k=k_0+1}^ik^{-1/2}<4,$$ 
whenever $k_0>i-i^{1/2}$, hence \eqref{equazz} implies that $x\in E^0_j\cup E^1_j$. The second case can be proved analogously.\\

\noindent Eventually we compute
$$\nu(E^\ell_j)\leq\prod_{k=i-i^{1/2}+2}^i\frac{1+k^{-1/2}}{2}\leq2^{-i^{1/2}+2}\prod_{k=i-i^{1/2}}^i(1+k^{-1/2})\leq 2^{-i^{1/2}+4}.$$
Therefore
$$\nu\left(\bigcap_{h=2}^{\infty}\bigcup_{j=h}^{\infty}(E^0_j\cup E^1_j)\right)=0$$
and since, by the observation above, this set contains the set of points $x$ such that $c_j(x)\not\in[e^{-4},e^4]$ frequently, the claim \eqref{eclaim} is proved.
\end{proof}

\noindent A function $f:\R^n\to\R$ is called $L$-Lipschitz ($L>0$) if 
$$|f(y)-f(x)|\leq L|y-x|,\;\;\; {\rm{for\;every\;}}x,y\in\R^n.$$
We conclude this section proving that given a compact nullset $E\subset\R$, there exists a 1-Lipschitz function $f:\R\to\R$ which is non-differentiable at any point $x\in E$.\\ 

\noindent The original proof of a stronger statement is contained in \cite{Za} (in particular the proposition is valid even without the compactness hypothesis on the set $E$). The proof we present uses the Baire Theorem (see Theorem 2.2 of \cite{Ru2}).

\begin{proposition}\label{nondiff}
Given a compact nullset $E\subset\R$, there exists a 1-Lipschitz function $f:\R\to\R$ which is non-differentiable at all points $x\in E$.
\end{proposition}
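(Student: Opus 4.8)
The plan is to deduce the statement from the Baire category theorem applied to the space $X$ of all $1$-Lipschitz functions on a compact interval $[a,b]\supset E$, equipped with the sup metric; $X$ is a closed subset of $C([a,b])$ and hence a complete metric space. (Once $f$ is produced on $[a,b]$ one extends it to a $1$-Lipschitz function on $\R$ by declaring it constant outside $[a,b]$, which does not affect differentiability at the interior points of $E$, so I may assume $E\subset(a,b)$.) I would show that the set $N:=\{f\in X:\ f \text{ is non-differentiable at every } x\in E\}$ is residual, and in particular non-empty.

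To this end, for $n\in\N$ fix the scale band $[\tfrac1{2n},\tfrac1n]$ and, writing $Q_hf(x):=\frac{f(x+h)-f(x)}{h}$, set
\[
G_n:=\Big\{f\in X:\ \forall x\in E\ \exists\, h,h'\in[\tfrac1{2n},\tfrac1n]\ \text{ with } Q_hf(x)-Q_{h'}f(x)>c\Big\},
\]
for a suitable fixed $c\in(0,1)$. If $f\in\bigcap_{n\ge N}G_n$, then at each $x\in E$ the right difference quotients $Q_hf(x)$ oscillate by more than $c$ on scales tending to $0$, hence fail to converge as $h\to0^+$, so $f$ is non-differentiable at $x$; thus $\bigcap_{n\ge N}G_n\subseteq N$. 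Each $G_n$ is open: on the compact band the quantity $\Phi_n(f,x):=\sup_{h,h'}(Q_hf(x)-Q_{h'}f(x))$ is continuous in $x$ and $8n$-Lipschitz in $f$ (a sup-norm perturbation $\delta$ moves each $Q_h$ by at most $2\delta/h\le4n\delta$), so by compactness of $E$ the set $G_n=\{f:\ \min_{x\in E}\Phi_n(f,x)>c\}$ is open.

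The heart of the matter is the density of each $G_n$, and this is where the hypotheses on $E$ are used. Given $f_0\in X$ and $\eps>0$, I would exploit that, $E$ being a compact nullset, its open $\tfrac1n$-neighbourhoods $E_{1/n}$ satisfy $\Leb_1(E_{1/n})\to0$; fix $n$ large with $\Leb_1(E_{1/n})<\eps/2$ and put $U:=E_{1/n}$. Then build $g$ with $g(a)=f_0(a)$, keeping $g'=f_0'$ off $U$ and letting $g'$ equal, on $U$, a $\pm1$ square wave of period $\tfrac1n$; thus $g$ is $1$-Lipschitz and $\|g-f_0\|_\infty\le\int_U|g'-f_0'|\le2\Leb_1(U)<\eps$. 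The two competing requirements—being sup-close to $f_0$ (which forces the modification onto a set of small measure) and governing $Q_hg(x)$ at the band scales (which requires the whole window $[x,x+h]$, $h\le\tfrac1n$, to lie inside the modified set)—are reconciled exactly by the nullity of $E$: since $U\supseteq(x-\tfrac1n,x+\tfrac1n)$ for every $x\in E$, each such window lies in $U$, so $Q_hg(x)=\frac1h\int_x^{x+h}g'$ is completely determined by the square wave. A phase analysis of the wave then shows that, whatever the position of $x$ relative to it, the averages $Q_hg(x)$ vary by at least a fixed $c_0>0$ as $h$ ranges over $[\tfrac1{2n},\tfrac1n]$ (the delicate phases, where the two endpoint scales give nearly equal averages, are resolved by an intermediate scale); choosing $c<c_0$ gives $g\in G_n$.

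I expect this density step to be the main obstacle, for two reasons. First, one must establish the uniform lower bound $c_0>0$ on the oscillation over all phases, i.e.\ over all positions of the points of $E$ relative to the inserted wave; this is a compactness argument on the (compact) phase parameter, but it is where the precise choice of band and wave matters. Second, the mechanism that makes the construction possible only for large $n$—namely $\Leb_1(E_{1/n})\to0$, which fails precisely when $E$ is not a nullset—must be invoked explicitly, and it is what confines the density argument to $n\ge N$; this is harmless, since the resulting oscillation scales still tend to $0$, so $\bigcap_{n\ge N}G_n\subseteq N$ and Baire's theorem yields a residual, hence non-empty, supply of functions with the desired property.
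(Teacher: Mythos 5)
Your overall strategy coincides with the paper's: both work by Baire category in the complete metric space of $1$-Lipschitz functions with the sup distance, and both perturb a given function by surgery on the derivative --- replacing it by a $\pm 1$ wave on a small open neighbourhood of $E$, which is sup-cheap precisely because $E$ is a compact nullset, so $\Leb_1(E_{1/n})\to 0$. But your Baire bookkeeping has a genuine gap: no $G_n$ is dense, for \emph{any} fixed $n$, however large, so the conclusion that $\bigcap_{n\ge N}G_n$ is residual is unjustified (an intersection of open sets that are not dense gets nothing from Baire and could even be empty). Concretely, take $f_0$ affine: then $Q_hf_0(x)$ does not depend on $h$, and by your own stability estimate a sup-norm perturbation of size $\delta$ moves each $Q_h$, $h\ge \frac{1}{2n}$, by at most $4n\delta$; hence every $g$ with $\|g-f_0\|_\infty<\frac{c}{8n}$ satisfies $Q_hg(x)-Q_{h'}g(x)<c$ for all $x$ and all $h,h'$ in the band, so the whole ball of radius $\frac{c}{8n}$ around $f_0$ is disjoint from $G_n$. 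This is exactly the quantifier problem your density sketch runs into: the requirement $\Leb_1(E_{1/n})<\eps/2$ ties $n$ to $\eps$, and for fixed $n$ the quantity $\Leb_1(E_{1/n})$ is a fixed positive number (you cannot shrink $U$ below the $\frac1n$-neighbourhood without the windows $[x,x+h]$ escaping the modified region), so the approximation fails once $\eps$ is small. What your construction genuinely proves is: for every $f_0$, every $\eps>0$ and every $j$, there exist \emph{some} $n\ge j$ and $g\in G_n$ with $\|g-f_0\|_\infty<\eps$.

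That statement is exactly what is needed, provided you repair the Baire step the way the paper does: define $H_j:=\bigcup_{n\ge j}G_n$, which is open and, by the sentence above, dense; then $\bigcap_{j\ge 1}H_j$ is residual, and any $f$ in it belongs to $G_{n_k}$ for a sequence $n_k\to\infty$, so at every $x\in E$ the right difference quotients oscillate by more than $c$ along scales tending to $0$, and $f$ is non-differentiable on $E$. The paper faces the identical issue --- its sets $U_i$, $V_i$ are explicitly only $2\varepsilon_i$-nets, not dense --- and resolves it by intersecting the unions $A_j=\bigcup_{i\ge j}U_i$ and $B_j=\bigcup_{i\ge j}V_i$ rather than the $U_i$, $V_i$ themselves; your $\bigcap_{n\ge N}G_n$ skips precisely this device. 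The remaining ingredients of your proposal (openness of $G_n$ via compactness of $E$ and of the scale band, and the uniform phase bound $c_0>0$ by compactness of the phase circle) are sound as sketched, though note that the paper avoids the phase analysis altogether: it pushes the slope close to $+1$ (the $U_i$) and close to $-1$ (the $V_i$) on whole components of the neighbourhood whose lengths dominate $\varepsilon_{i+1}$, which yields $f'_+=1$ and $f'_-=-1$ at every point of $E$ directly, with no uniform-oscillation lemma needed.
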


\begin{proof}
Throughout the proof we will denote by $X$ the complete metric space of real valued 1-Lipschitz functions on the real line, endowed with the supremum distance. We will actually prove a stronger statement: namely that the family of 1-Lipschitz functions $f:\R\rightarrow\R$ such that $f$ is non-differentiable at the points of $E$ is \emph{residual} (i.e. it contains the intersection of countably many open dense sets), and in particular, by the Baire Theorem, it is dense in $X$.\\

\noindent Define inductively an infinitesimal sequence of positive numbers $(\varepsilon_i)_{i\in\N}$ and a sequence of open sets $(E_i)_{i\in\N}$, with the following properties
\begin{itemize}
  \item $E\subset E_{i+1}\subset E_i$;
  \item $E_i$ is a finite union of disjoint open intervals;
  \item $\Leb_1(E_i)\leq\varepsilon_i;$
  \item $\varepsilon_{i+1}\leq\alpha_i\varepsilon_i$, where 
  $$\alpha_i:=\min\{\Leb_1(I):I\; {\rm{is\;a\;connected\;component\;of}}\; E_i\}.$$
\end{itemize}
Define the following subsets of $X$ (we will write c.c. for ``connected component''):
$$U_i=\{g\in X:g(b)-g(a)>(b-a)-\varepsilon_{i+1},\; {\rm{whenever}}\; (a,b)\; {\rm{is\;a\;c.c.\;of}}\; E_i\},$$
$$V_i=\{g\in X:g(b)-g(a)<\varepsilon_{i+1}-(b-a),\; {\rm{whenever}}\; (a,b)\; {\rm{is\;a\;c.c.\;of}}\;  E_i\},$$
$$A_j=\bigcup_{i\geq j}U_i,\;\;\;B_j=\bigcup_{i\geq j}V_i.$$

\noindent Obviously $U_i$ and $V_i$ are open sets for every $i$, and therefore $A_j$ and $B_j$ are also open, for every $j$. Moreover, $U_i$ and $V_i$ are $2\varepsilon_i$-nets in $X$, by which we mean that for every element $\phi\in X$ there is an element $\phi_i\in U_i$ (respectively $V_i$) such that ${\rm{dist}}(\phi,\phi_i)\leq 2\varepsilon_{i}$. To prove this fact, consider for every function $\phi\in X$ the function
$$\phi_i(x)=\phi\bigg(x-\int_{-\infty}^x\chi_{E_i}(t)\;{\rm{d}}t\bigg)+\int_{-\infty}^x\chi_{E_i}(t)\;{\rm{d}}t,$$
which has the following properties: $\phi'_i(x)=\phi'(x)$ for a.e. $x\not\in E_i$ and $\phi'_i(x)=1$ if $x\in E_i$.
This is clearly an element of $U_i$ and $\|\phi-\phi_i\|_{\infty}\leq 2\varepsilon_i$. The proof that $V_i$ is a $2\varepsilon_i$-net is analogous.

\noindent As a consequence, $A_j$ and $B_j$ are dense for every $j$.
Finally,
$$A=\left(\bigcap_{j=1}^{\infty}A_j\right)\cap\left(\bigcap_{j=1}^{\infty}B_j\right)$$
is a residual set in $X$ (in particular it is dense).\\

\noindent Next we prove that every function $f\in A$ is not differentiable at any point of $E$. More precisely, we claim that
$$f'_+(x)=\limsup_{|h|\searrow 0}\frac{f(x+h)-f(x)}{h}=1$$
and
$$f'_-(x)=\liminf_{|h|\searrow 0}\frac{f(x+h)-f(x)}{h}=-1$$
for every $x\in E$.
Fix $\varepsilon>0$ and take $i\in\N$ such that $3\varepsilon_i<\varepsilon$, and $f\in U_i$. Let $I=(a,b)$ be the connected component of $E_i$ containing $x$. Take a point $y\in I$ such that
$${\rm{dist}}(x,y)\geq \frac{\Leb_1(I)}{3}.$$
Let $I'$ be the open interval with end points $x$ and $y$. Since on $(a,b)$ we have $f'\leq 1$ a.e. and $f(b)-f(a)\geq b-a-\varepsilon_{i+1}$, then we also have 
$$\int_{I'} f'(t)\;{\rm{d}}t\geq |x-y|-\varepsilon_{i+1}.$$ Therefore we conclude:
$$\frac{f(y)-f(x)}{y-x}\geq\frac{|y-x|-\varepsilon_{i+1}}{|y-x|}\geq 1-\frac{3\varepsilon_{i+1}}{\Leb_1(I)}\geq 1-\frac{3\varepsilon_{i+1}}{\alpha_i}\geq 1-3\varepsilon_i\geq1-\varepsilon.$$
Similarly we can prove that $f'_-(x)=-1$ for every $x\in E$.
\end{proof}

\section{Proof of Theorem \ref{main}}\label{s3}
\noindent By Proposition \ref{doublingmeasure} and Proposition \ref{nondiff} we deduce that the class of pairs $(\mu,f)$ satisfying the assumption of Theorem \ref{main} is non empty, at least for $n=1$. Indeed it is sufficient to consider the measure $\nu$ determined by Proposition \ref{doublingmeasure} and then to take any compact nullset $E$ with $\nu(E)>0$ and to define $\mu:=\nu\trace E$. Eventually, one can find the appropriate function $f$, applying Proposition \ref{nondiff} to the set $E$. To prove the same fact for $n>1$, one should replace our Proposition \ref{nondiff} with Theorem 1.13 of \cite{DPR}.\\

\noindent To prove Theorem \ref{main} assume by contradiction that there exist a $C^1$ function $g$ such that, denoting $$A:=\{x\in\R^n: g(x)=f(x)\},$$ there holds $\mu(A)>0$. We can assume that $f$ is 1-Lipschitz and $g$ is globally $L$-Lipschitz for some $L>0$.

\noindent Denote $h:=f-g$. Observe that $h$ is $(1+L)$-Lipschitz and $h=0$ on $A$. We claim that $Dh$ exists and it is equal to $0$ at $\mu$-a.e. point of $A$, which is a contradiction because it would imply that $f$ is differentiable on a set of positive measure $\mu$.\\

\noindent To prove the claim, consider the set $P\subset A$ of points where either $Dh$ does not exist or $Dh\neq 0$. In particular, for every $x\in P$, there exists a constant $C(x)>0$ and a sequence of points $y_k\to x$ such that
\begin{equation}
|h(y_k)|>C(x)|y_k-x|,
\end{equation}
for every $k\in\N$. Then for every $x\in P$ and for every $k\in\N$ we would have $h\neq 0$ on the open ball $B_k$ centered at $y_k$ with radius ${\frac{C(x) |y_k-x|}{L+1}}$.
Since $P\subset A$ this implies that the set $P$ is porous. Hence $\mu(P)=0$.

\section{Lusin type theorem for gradients}\label{s4}
\noindent In this section we discuss the possibility to extend and possibly to improve the result of \cite{Al}, when we replace the Lebesgue measure with any Radon measure. We will consider only the one-dimensional setting. The higher dimensional case and further results are discussed in \cite{MS} The first statement is that the result of \cite{Al} is valid with respect to any Radon measure.

\begin{theorem}\label{grad1}
Let $g:\R\to\R$ be a bounded Borel function and $\mu$ be a Radon measure on $\R$. Then for every $\varepsilon>0$ there exist a set $E\subset\R$ with $\mu(\R\setminus E)<\varepsilon$ and a $C^1$ function $f:\R\to\R$ such that $f'=g$ on the set $E$.
\end{theorem}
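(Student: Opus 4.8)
The plan is to reduce the statement to the elementary one-dimensional fact that the primitive of a continuous function is $C^1$. Note first that the statement imposes a necessary condition: since $f\in C^1$ has continuous derivative, on the set $E$ where $f'=g$ the function $g$ must coincide with (the restriction of) a continuous function. The point of the one-dimensional setting is that this condition is also essentially sufficient, because \emph{every} continuous function on $\R$ is a derivative, namely the derivative of its own primitive. This is precisely the feature that breaks down in higher dimensions --- a continuous vector field need not be a gradient --- which is why the present statement is easy while the result of \cite{Al} is not.

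I would carry out three steps. First, I invoke Lusin's theorem for the Radon measure $\mu$: since $g$ is bounded and Borel and $\mu$ is inner regular, for every $\varepsilon>0$ there is a closed set $E\subset\R$ with $\mu(\R\setminus E)<\varepsilon$ such that the restriction $g|_E$ is continuous. Second, by the Tietze extension theorem I extend $g|_E$ to a continuous function $\bar g:\R\to\R$ (which may be kept bounded by $\sup_E|g|$, though only continuity is needed below). Third, I set $f(x):=\int_0^x \bar g(t)\,dt$; since $\bar g$ is continuous, the fundamental theorem of calculus shows that $f$ is $C^1$ with $f'=\bar g$ everywhere, and in particular $f'=\bar g=g$ on $E$. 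This $f$ and this $E$ are the required objects.

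The only point requiring care --- and the step I expect to be the main obstacle --- is upgrading the usual finite-measure form of Lusin's theorem to a \emph{single closed set} on all of $\R$, since $\mu(\R)$ may be infinite. I would handle this by exhaustion: choose partition points $(p_m)_{m\in\Z}$ that are not atoms of $\mu$ (possible, as the set of atoms is countable), and since each $p_m$ has $\mu(\{p_m\})=0$, delete a thin open neighborhood of $p_m$ of $\mu$-measure less than $\varepsilon\,2^{-|m|-2}$, so as to create gaps. On each remaining closed interval I apply the finite-measure Lusin theorem to obtain a compact set $F_m$ with $g|_{F_m}$ continuous and with deleted part of $\mu$-measure less than $\varepsilon\,2^{-|m|-2}$. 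The pieces $F_m$ then lie in disjoint intervals separated by the gaps and form a locally finite family, so $E:=\bigcup_m F_m$ is closed; moreover any convergent sequence in $E$ is eventually contained in a single $F_m$, whence $g|_E$ is continuous, while $\mu(\R\setminus E)<\varepsilon$ by the summed estimates. Apart from this bookkeeping the argument is immediate, the essential one-dimensional ingredient being the existence of a $C^1$ primitive.
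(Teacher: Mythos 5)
Your proposal is correct and follows essentially the same route as the paper: both apply Lusin's theorem to replace $g$ by a continuous function off a set of $\mu$-measure less than $\varepsilon$ and then take the primitive $f(x)=\int_0^x \bar g(t)\,dt$, which is $C^1$ by the fundamental theorem of calculus. Your extra bookkeeping (closed set plus Tietze, and the exhaustion with non-atomic partition points to handle $\mu(\R)=\infty$) is a sound and welcome elaboration of a step the paper dispatches with a bare citation of the standard Lusin theorem, whose textbook form indeed assumes finite measure.
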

\begin{proof}
Fix $\varepsilon>0$. By the standard Lusin theorem (see Theorem 2.24 of \cite{Ru1}), there exist a set $E\subset\R$ with $\mu(\R\setminus E)<\varepsilon$ and a bounded and continuous function $h:\R\to\R$ such that $h=g$ on the set $E$. 
Denote 
$$f(x):=\int_0^xh(t) dt.$$ 
Clearly $f$ is $C^1$ and it holds $f'=h=g$ on the set $E$.
\end{proof}

\noindent In the next step we want to replace in theorem \ref{grad1} $C^1$ functions with Lipschitz functions. Clearly when $\mu$ is the Lebesgue measure, the Rademacher theorem is an obstruction to prescribe non-linear local behaviors at many points. Since by \cite{Za} we know that for a singular measure $\mu$ one can find Lipschitz functions which are $\mu$-almost everywhere non-differentiable, we wonder if it is possible to find a a Lipschitz function with an arbitrarily prescribed ``type'' of non-differentiability. Given a pair $(a,b)$ in $\R^2$ we say that a function $f:\R\to\R$ is {\emph{$(a,b)$-differentiable}} at the point $x_0$ if the two limits
$$\lim_{x\to x_0^-}\frac{f(x)-f(x_0)}{x-x_0}, \lim_{x\to x_0^+}\frac{f(x)-f(x_0)}{x-x_0}$$ 
exist and they are equal to $a$ and $b$ respectively. Note that, for a Lipschitz function, this is the only admissible type of ``first order approximation'' of $f$ at $x_0$ (i.e. the only possible behavior which is invariant under blowups). The following proposition shows that in general, even if $\mu$ is singular, it is not possible to prescribe a non-linear first order approximation at many points.
\begin{proposition}\label{grad2}
Let $\mu$ be a Radon measure on $\R$ and let $a, b:\R\to\R$ be bounded Borel functions, such that $a(x)\neq b(x)$ $\mu$-a.e. Then the following property $(P)$ holds if and only if $\mu$ is an atomic measure. 

$(P)$ For every $\delta>0$ there exist a set $E\subset\R$ with $\mu(\R\setminus E)<\delta$ and a Lipschitz function $f:\R\to\R$ such that for every $x\in E$, $f$ is $(a(x),b(x))$-differentiable.
\end{proposition}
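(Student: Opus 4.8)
The plan is to prove both implications of the biconditional, with the substance concentrated in the direction from $(P)$ to $\mu$ atomic, which I will reduce to the following elementary but crucial fact about Lipschitz functions. If $f:\R\to\R$ is Lipschitz, then the set
$$C(f):=\{x\in\R: f'_-(x)\ \text{and}\ f'_+(x)\ \text{both exist and}\ f'_-(x)\neq f'_+(x)\}$$
of \emph{corners} of $f$ is at most countable; here $f'_-(x)$ and $f'_+(x)$ denote the left and right derivatives of $f$ at $x$. The point is that if $f$ is $(a(x),b(x))$-differentiable at $x$ with $a(x)\neq b(x)$, then $x\in C(f)$, so a Lipschitz function can exhibit a genuinely one-sided (non-smooth) first order behaviour only on a countable set.

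For the implication from $\mu$ atomic to $(P)$ I would first capture almost all of the mass of $\mu$ by a \emph{locally finite} (hence closed and discrete) set of atoms. Given $\delta>0$, inside each interval $[j,j+1)$, $j\in\Z$, the atomic mass is finite, so finitely many of its atoms carry all but $\delta\,2^{-|j|-2}$ of it; collecting these over $j$ yields a locally finite set $E$ with $\mu(\R\setminus E)<\delta$. Since $E$ is discrete, every $x_i\in E$ is isolated, and there I can prescribe the one-sided derivatives at will: setting $\rho_i:=\tfrac13\dist(x_i,E\setminus\{x_i\})>0$, I define $f(x):=\int_0^x u(t)\,dt$, where $u$ equals $a(x_i)$ on $(x_i-\rho_i,x_i)$, equals $b(x_i)$ on $(x_i,x_i+\rho_i)$, and is $0$ elsewhere. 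These intervals are pairwise disjoint and $|u|\leq\max\{\|a\|_\infty,\|b\|_\infty\}$, so $f$ is Lipschitz, while the difference quotients at each $x_i$ are \emph{exactly} $a(x_i)$ from the left and $b(x_i)$ from the right; thus $f$ is $(a(x_i),b(x_i))$-differentiable on $E$, proving $(P)$.

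For the converse I argue by contraposition. Assume $\mu$ is not purely atomic, so its non-atomic part $\mu_c$ is nonzero, and fix a bounded interval $B$ with $0<\mu_c(B)<\infty$ (possible by local finiteness of $\mu$); put $\delta:=\tfrac12\mu_c(B)$. If $(P)$ held for this $\delta$, there would be a set $E$ with $\mu(\R\setminus E)<\delta$ and a Lipschitz $f$ that is $(a(x),b(x))$-differentiable on $E$, and then
$$\mu_c(E\cap B)\ \geq\ \mu_c(B)-\mu(\R\setminus E)\ >\ \mu_c(B)-\delta\ =\ \delta\ >\ 0.$$
Since $a\neq b$ $\mu$-a.e., discarding a $\mu$-null set lets me assume $a(x)\neq b(x)$ for every $x\in E$, so $E\cap B\subset C(f)$. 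But $C(f)$ is countable by the Lemma, and the non-atomic measure $\mu_c$ annihilates every countable set, forcing $\mu_c(E\cap B)=0$ and contradicting the display above. Hence $(P)$ forces $\mu$ to be atomic.

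The hard part is the Lemma on the countability of $C(f)$, and that is where the real work lies. I would write $C(f)=C^+\cup C^-$ according to the sign of $f'_+(x)-f'_-(x)$, handle $C^+$ (where $f'_+>f'_-$), and deduce the statement for $C^-$ by applying the result to $-f$. I then decompose $C^+$ into countably many pieces indexed by an integer $n$, recording the threshold $f'_+(x)-f'_-(x)>1/n$, and by a grid point $q$ approximating the midpoint $m(x):=\tfrac12\big(f'_+(x)+f'_-(x)\big)$ to within $1/(8n)$. The key computation, carried out from the definitions of the one-sided derivatives, is that on each such piece the point $x$ is a \emph{strict local minimum of the single function} $g_q(t):=f(t)-qt$, in the quantitative form $g_q(t)>g_q(x)+\tfrac{1}{4n}|t-x|$ for all $t\neq x$ near $x$. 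The proof then closes with the classical observation that a fixed real function has at most countably many strict local minima, since each is the unique minimizer of $g_q$ on some interval with rational endpoints, which yields an injection into $\mathbb{Q}^2$. Summing over the countably many pairs $(n,q)$ gives the Lemma, and hence the proposition.
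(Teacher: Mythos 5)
Your proof is correct, and in the key direction it takes a genuinely different route from the paper. The paper does not prove your corner-countability lemma directly: it first invokes its Theorem \ref{grad1} to subtract a $C^1$ function whose derivative equals $(a+b)/2$ on a set of almost full measure, reducing to the case where the two one-sided derivatives have opposite signs; then $\mu$-a.e.\ point of $E$ is a strict local extremum of the modified Lipschitz function, and it concludes because the set $A_i$ of points that are unique minimizers on $(x-1/i,x+1/i)$ is $1/i$-separated, hence countable. You instead prove, self-containedly, that the full corner set $C(f)$ is countable by tilting: for a corner with jump exceeding $1/n$ and a grid point $q$ within $1/(8n)$ of the midpoint slope, $x$ becomes a quantitative strict local minimum of $g_q(t)=f(t)-qt$, and strict local minima of a fixed function inject into pairs of rational endpoints. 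Both arguments bottom out in the same classical countability of strict local minima, but yours buys independence from Theorem \ref{grad1} (indeed your corner lemma never uses the Lipschitz property and holds for arbitrary $f:\R\to\R$), whereas the paper's reduction is shorter given that Theorem \ref{grad1} is already established there; as a small aside, the paper's formula $f:=f_0+f_1$ in that reduction has a sign slip (it should be $f_1-f_0$), a step your approach bypasses entirely. You also spell out the atomic direction --- the locally finite selection of atoms with mass control $\delta 2^{-|j|-2}$ per unit interval, disjoint intervals of radius $\rho_i=\frac13\dist(x_i,E\setminus\{x_i\})$, and $f$ as the integral of a bounded step function --- which the paper merely asserts as an easy piecewise affine construction; and your contraposition via the non-atomic part $\mu_c$, which annihilates countable sets, is a clean and standard substitute for the paper's direct argument that $E$ must be countable.
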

\begin{proof}
If $\mu$ is an \emph{atomic measure} (i.e. there exists a countable set $N$ such that $\mu(\R\setminus N)=0$), then it is very easy to prove the validity of property $(P)$ constructing, for every $\delta$, an appropriate piecewise affine function $f$.\\ 
\noindent Now assume property $(P)$ holds for the functions $a$ and $b$. Then it also holds if we replace $a$ and $b$ with 
$$a_1:=a-(a+b)/2,\;\;\; b_1:=b-(a+b)/2.$$
Indeed, given $\delta>0$ one can apply Theorem \ref{grad1} to the function $g:=(a+b)/2$ with parameter $\varepsilon:=\delta/2$, thus obtaining a set $E_0$ and a function $f_0$. Then it is sufficient to find a set $E_1$ and a function $f_1$ satisfying property $(P)$ for $a$, $b$ and $\delta/2$. Hence the function $f:=f_0+f_1$ and the set $E:=E_0\cap E_1$ yield property $(P)$ for the fixed parameter $\delta$ and the functions $a_1$ and $b_1$.\\
\noindent Now we have that $a_1(x)$ and $b_1(x)$ have different sign (non zero) for $\mu$-almost every point $x$. Note that this implies that $\mu$-almost every $x\in E$ is a strict local maximum or minimum for $f$. We claim that there are at most countably many such points, which implies that $\mu$ is an atomic measure. To prove the claim, for every $i\in\N$ we denote by $A_i$ the set of points $x$ in $E$ such that $f(x)$ is the unique minimum of $f$ in the interval $(x-1/i,x+1/i)$. By construction, the set $A_i$ is discrete for every $i\in\N$, hence the union of the sets $A_i$ (which contains $\mu$-a.e. point of $E$) is at most countable.
\end{proof}

\noindent Even if for a general measure it is not possible to prescribe any form of non-differentiable first order approximation, it might be possible to prescribe the existence of a non-linear blow up, at many points $x$. By this we mean to find for those points $x$ a sequence of radii such that the corresponding rescaled functions converge to the prescribed non-linear blow-up at the point. Note that both the prescribed blow-up and the scales at which it is attained may vary from point to point. Such problem is treated in \cite{MS}.

\bibliographystyle{plain}

%%%%%%%%%%%%%%%%%%%%%%%%%%%%%%%%%%%%%%%%%%%%%%%%%%%%%%%%%%%%%%%%%
%
%	AFFILIATIONS
%
%%%%%%%%%%%%%%%%%%%%%%%%%%%%%%%%%%%%%%%%%%%%%%%%%%%%%%%%%%%%%%%%%

\vskip .5 cm

{\parindent = 0 pt\begin{footnotesize}

A.M.
\\
Institut f\"ur Mathematik,
Mathematisch-naturwissenschaftliche Fakult\"at,
Universit\"at Z\"urich\\
Winterthurerstrasse 190,
CH-8057 Z\"urich,
Switzerland
\\
e-mail: {\tt andrea.marchese@math.uzh.ch}

\end{footnotesize}
}

\end{document}